\newcommand{\CC}{\mathbb{C}}
\newcommand{\Hc}{\mathcal{H}}
\newcommand{\set}[1]{\left\{ #1 \right\}}
\newcommand{\setb}[1]{\left( #1 \right)}
\newcommand{\abs}[1]{\left| #1 \right|}
\newcommand{\gfr}{\mathfrak{g}}
\newcommand{\bino}[2]{\begin{pmatrix} #1 \\ #2 \end{pmatrix}}
\newtheorem{mymasterthm}{notForUse}
\theoremstyle{definition}
\theoremstyle{plain}
\newtheorem{mylemma}[mymasterthm]{Lemma}
\newtheorem{mythm}[mymasterthm]{Theorem}
\newtheorem{myprop}[mymasterthm]{Proposition}
\title{A Polynomial Variant of Diophantine Triples in Linear Recurrences}
\subjclass[2000]{11B37, 11D61}
\keywords{Diophantine triples, linear recurrence sequences, function fields}
\author[C. Fuchs]{Clemens Fuchs}
\author[S. Heintze]{Sebastian Heintze}
\thanks{Supported by Austrian Science Fund (FWF): I4406.}
\address{University of Salzburg\newline
	\indent Department of Mathematics\newline
	\indent Hellbrunnerstr. 34 \newline
	\indent A-5020 Salzburg, Austria}
\email{clemens.fuchs@sbg.ac.at, sebastian.heintze@sbg.ac.at}
\begin{document}
	
	\maketitle
	
	\begin{abstract}
		Let $ (G_n)_{n=0}^{\infty} $ be a polynomial power sum, i.e. a simple linear recurrence sequence of complex polynomials with power sum representation $ G_n = f_1\alpha_1^n + \cdots + f_k\alpha_k^n $ and polynomial characteristic roots $ \alpha_1,\ldots,\alpha_k $. For a fixed polynomial $ p $, we consider triples $ (a,b,c) $ of pairwise distinct non-zero polynomials such that $ ab+p, ac+p, bc+p $ are elements of $ (G_n)_{n=0}^{\infty} $. We will prove that under a suitable dominant root condition there are only finitely many such triples if neither $ f_1 $ nor $ f_1 \alpha_1 $ is a perfect square.
	\end{abstract}
	
	\section{Introduction}
	
	The study of Diophantine tuples started more than two thousand years ago and is well known in the meantime. Hereby, a Diophantine $ n $-tuple is a set $ \set{a_1,\ldots,a_n} $ of rational integers with the property that $ a_i a_j + 1 $ is a perfect square for all $ 1 \leq i < j \leq n $.
	Furthermore, one can consider so-called $ D(m) $-$ n $-tuples which are sets $ \set{a_1,\ldots,a_n} $ with the property that $ a_i a_j + m $ is a perfect square. For $ m = 1 $ we get the above defined Diophantine $ n $-tuples.
	One can consider many other variants of Diophantine tuples, e.g. algebraic integers or polynomials instead of integers, higher or perfect powers instead of squares, etc.;
	for a summary about Diophantine tuples and its variants we refer to \cite{dujella-web}.
	
	In \cite{dujella-fuchs-luca-2008} it is proved that the size of a set of complex polynomials with the property that the product of any two of them plus $ 1 $ is a perfect square is bounded above by $ 10 $.
	This bound is in \cite{dujella-jurasic-2010} reduced to $ 7 $.
	It is not clear what the expected true upper bound is.
	If we would add an arbitrary given polynomial $ p $ instead of $ 1 $, then in the general case there is no upper bound known so far. In the case of linear polynomials $ p $ there are some results in \cite{dujella-fuchs-tichy-2002} as well as in \cite{dujella-fuchs-walsh-2006} and for quadratic polynomials $ p $ in \cite{jurasic-2011}, but these results assume that the polynomials have integer coefficients.
	
	Since the sequence of squares can be written as a linear recurrence sequence, one can ask questions about existence of Diophantine tuples and finiteness of their number not only in the case of squares but also if we restrict $ a_i a_j + 1 $ to take values in an arbitrary fixed linear recurrence sequence. This situation has been considered in \cite{fuchs-luka-szalay-2008} for the first time.
	For instance, in \cite{fuchs-hutle-luca-2018} (see also the papers cited therein) the first author together with Hutle and Luca considered triples $ \setb{a,b,c} $ of positive integers satisfying $ 1 < a < b < c $ such that $ ab+1, ac+1, bc+1 $ are values in a linear recurrence sequence of Pisot type with Binet formula $ f_1\alpha_1^n + \cdots + f_k\alpha_k^n $. They proved three independent conditions under which there are only finitely many such triples. One of these conditions allows neither $ f_1 $ nor $ f_1 \alpha_1 $ to be a perfect square. This condition will be used in our statement, too.
	
	In the present paper we consider a function field variant of the Diophantine tuples taking values in recurrences. Namely, we study triples $ \setb{a,b,c} $ of pairwise distinct non-zero complex polynomials with the property that $ ab+1, ac+1, bc+1 $ are elements of a given linear recurrence sequence of polynomials. It will be proven that under some conditions on the recurrence sequence there are only finitely many such triples. In fact we are going to prove even more: the same result still holds if we add an arbitrary fixed polynomial $ p $ instead of $ 1 $.
	We mention that the same holds true if we replace $ \CC $ by an arbitrary algebraically closed field of characteristic $ 0 $.
	
	\section{Results}
	
	We call $ (G_n)_{n=0}^{\infty} $ a polynomial power sum if it is a simple linear recurrence sequence of complex polynomials with power sum representation
	\begin{equation}
		\label{p4-eq:polypowsum}
		G_n = f_1\alpha_1^n + \cdots + f_k\alpha_k^n
	\end{equation}
	such that $ \alpha_1, \ldots, \alpha_k \in \CC[X] $ are polynomials and $ f_1,\ldots,f_k \in \CC(X) $.
	Our main theorem that we are going to prove in this paper is the following statement:
	\begin{mythm}
		\label{p4-thm:mainresult}
		Let $ (G_n)_{n=0}^{\infty} $ be a polynomial power sum given as in \eqref{p4-eq:polypowsum}. Assume either that the order of this sequence is $ k \geq 3 $ and the dominant root condition $ \deg \alpha_1 > \deg \alpha_2 > \deg \alpha_3 \geq \deg \alpha_4 \geq \cdots \geq \deg \alpha_k $ is fulfilled, or that the order is $ k = 2 $ and we have $ \deg \alpha_1 > \deg \alpha_2 > 0 $.
		Moreover, let $ p \in \CC[X] $ be a given polynomial.
		If neither $ f_1 $ nor $ f_1 \alpha_1 $ is a square in $ \CC(X) $, then there are only finitely many triples $ \setb{a,b,c} $ of pairwise distinct non-zero polynomials such that $ ab+p, ac+p, bc+p $ are all elements of $ (G_n)_{n=0}^{\infty} $.
	\end{mythm}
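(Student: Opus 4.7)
The plan is to derive an algebraic identity from any admissible triple $(a,b,c)$, expand it at the place at infinity of $\CC(X)$ via the dominant-root asymptotic, and then exploit the polynomial nature of $a,b,c$ to reduce the problem to a function-field unit equation whose only degenerate solution is ruled out by the non-square hypothesis. Setting $\Phi_n:=G_n-p$, if $ab+p=G_x$, $ac+p=G_y$, $bc+p=G_z$ then multiplying the first two equations and substituting $bc=\Phi_z$ yields
\[
a^{2}\,\Phi_z=\Phi_x\Phi_y,\qquad b^{2}\,\Phi_y=\Phi_x\Phi_z,\qquad c^{2}\,\Phi_x=\Phi_y\Phi_z,
\]
so each of the three quotients is a square in $\CC(X)$, and $\Phi_x\Phi_y\Phi_z=(abc)^{2}$. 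For any fixed index triple $(x,y,z)$ these identities determine $a$ up to a sign, after which $b$ and $c$ are determined; hence only finitely many polynomial triples arise per index triple. If the theorem failed there would therefore be infinitely many distinct admissible index triples, and after relabelling and passing to a subsequence we may assume $x\le y\le z$ with $z\to\infty$ and the parities of $x,y,z$ held constant.

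The dominant-root condition now enters at the place at infinity. Writing $t=1/X$, for $n$ sufficiently large one has $\Phi_n=f_1\alpha_1^{n}(1+\epsilon_n)$ where $\epsilon_n\in\CC(X)$ has $t$-order tending to infinity, since each $(\alpha_j/\alpha_1)^{n}$ for $j\ge 2$ and $p/(f_1\alpha_1^{n})$ vanishes at infinity to ever higher order. Substituting into $a^{2}\Phi_z=\Phi_x\Phi_y$ and taking the formal square root of $E_{x,y,z}:=(1+\epsilon_x)(1+\epsilon_y)/(1+\epsilon_z)\in 1+t\,\CC[[t]]$ via the binomial series yields
\[
a=\pm\sqrt{f_1\,\alpha_1^{x+y-z}}\,\bigl(1+A_1 t+A_2 t^{2}+\cdots\bigr)\in\CC((t)),
\]
where the coefficients $A_i$ are polynomial expressions in the data $f_j/f_1$, $(\alpha_j/\alpha_1)^{n}$, and $p/(f_1\alpha_1^{n})$ for $n\in\{x,y,z\}$. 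Because $a\in\CC[X]$, its Laurent expansion at infinity must truncate, forcing all sufficiently late $A_i$ to vanish; together with the analogous expansions from the $b^{2}$ and $c^{2}$ equations, these vanishings translate into a system of $S$-unit equations in the multiplicative subgroup of $\CC(X)^{*}$ generated by the $\alpha_j$'s and the $f_j$'s. The function-field subspace theorem (Brownawell--Masser or Evertse--Schlickewei) then bounds $\max(x,y,z)$ outside of a specific degenerate case.

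That degenerate case is precisely the one in which $\sqrt{f_1\alpha_1^{x+y-z}}$ can be absorbed into a genuine polynomial identity, which requires $f_1$ (if $x+y+z$ is even) or $f_1\alpha_1$ (if $x+y+z$ is odd) to already be a square in $\CC(X)$; both are excluded by hypothesis, so finitely many index triples remain and the conclusion follows. The main obstacle is carrying out this last step rigorously: isolating the unique degenerate solution of the resulting unit equation and verifying that it corresponds exactly to the two excluded squareness conditions. This is the function-field analogue of the Baker-type argument in \cite{fuchs-hutle-luca-2018}, and it is where the interplay between the three identities, the parity of $x+y+z$, and the non-square assumptions on $f_1$ and $f_1\alpha_1$ crucially comes together.
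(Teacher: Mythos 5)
Your overall strategy --- expanding $\sqrt{G_n-p}$ at infinity via the binomial/multinomial series, feeding the result into a function-field subspace theorem, and showing that the surviving degenerate case forces $f_1$ or $f_1\alpha_1$ to be a square according to the parity of $x+y+z$ --- is exactly the architecture of the paper's proof. But two essential quantitative ingredients are missing, and without them the subspace-theorem step does not close. First, you never show that $x\to\infty$; the paper needs a separate argument (from $bG_y-aG_z=(b-a)p$ and a degree comparison) to rule out that $x$ stays bounded while $y,z\to\infty$, and the final contradiction is precisely a bound on $x$. Second, and more seriously, you need the three indices to grow at comparable rates: the height of $abc$ is of order $z\deg\alpha_1$, while the order of vanishing at infinity of the truncated tail is only of order $Jx$, so the inequality coming from the subspace theorem, $J(x+C_7)\leq C_{10}+C_{11}z\deg\alpha_1$, yields nothing unless one knows $x\geq C_6 z$. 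The paper obtains this from a gcd argument: since $c\mid\gcd(G_y-p,G_z-p)$ and $b\mid\gcd(G_x-p,G_z-p)$, one bounds $\deg\gcd(G_y-p,G_z-p)$ by $C_4+z\kappa\deg\alpha_1$ with $\kappa=\deg\alpha_1/(1+\deg\alpha_1)$ (splitting into the cases $y\leq\kappa z$ and $y>\kappa z$), and deduces $x>(1-\kappa)z-C_5$. Nothing in your write-up supplies this.

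There are also two local problems. The assertion that $a\in\CC[X]$ forces ``all sufficiently late $A_i$ to vanish'' is not correct as stated: what truncates is the Laurent expansion of $a$ itself, not of $a/\sqrt{f_1\alpha_1^{x+y-z}}$, and the prefactor is in general not in $\CC((1/X))$ but in a quadratic extension, so the individual $A_i$ need not vanish. The paper avoids this by working in $F=\CC(X,\sqrt{\alpha_1},\sqrt{f_1})$ and applying Zannier's theorem to $\sigma=abc-\sum_j f_1^{3/2}\alpha_1^{(x+y+z)/2}t_j^{(x,y,z)}$, where the tail merely has large $\nu_{\infty}$ and is never claimed to vanish. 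Finally, the endgame you flag as ``the main obstacle'' is in fact the easy part once the subspace theorem has been applied correctly: in the linearly dependent case one obtains $abc=f_1^{3/2}\alpha_1^{(x+y+z)/2}\cdot r$ with $r\in\CC(X)$, whence $\sqrt{f_1}\in\CC(X)$ or $\sqrt{f_1\alpha_1}\in\CC(X)$ according to the parity of $x+y+z$; there is no further unit equation to analyse. As it stands, your proposal identifies the right tools but leaves unproven the two steps that actually make the argument work.
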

	
	Note that we do not require that the linear recurrence sequence is non-degenerate, i.e. there may be two indices $ i $ and $ j $ such that $ \alpha_i $ and $ \alpha_j $ differ only be a constant factor. The only non-degeneracy properties we need are given by the dominant root condition.
	
	Moreover, the dominant root condition is really necessary in this situation. If we omit the dominant root condition, then the statement of the theorem does not hold any more as the following example illustrates:
	Let $ A_n, B_n, C_n $ be three linear recurrence sequences of polynomials. Then the products $ A_nB_n, A_nC_n, B_nC_n $ are also linear recurrence sequences of polynomials.
	Let $ A_nB_n = s_1 \sigma_1^n + \cdots + s_k \sigma_k^n $ be the Binet representation. Consider now a new linear recurrence sequence $ D_n $ generated from $ A_nB_n $ by the following procedure: Replace each summand $ s_i \sigma_i^n $ with the term
	\begin{equation*}
		\frac{1}{3} s_i \sigma_i^n + \frac{1}{3} s_i (\zeta_3 \sigma_i)^n +  \frac{1}{3} s_i (\zeta_3^2 \sigma_i)^n
	\end{equation*}
	where $ \zeta_3 $ is a primitive third root of unity. This new sequence $ D_n $ has for indices of the shape $ 3u $ the same values as $ A_nB_n $ and is zero otherwise. In other words $ D_{3u} = A_{3u}B_{3u} $ and $ D_{3u+1} = 0 = D_{3u+2} $.
	In the same manner we construct linear recurrence sequences $ E_n $ and $ F_n $ such that $ E_{3u+1} = A_{3u}C_{3u} $ and $ E_{3u} = 0 = E_{3u+2} $ as well as $ F_{3u+2} = B_{3u}C_{3u} $ and $ F_{3u} = 0 = F_{3u+1} $.
	Last but not least we define the sequence $ G_n := D_n + E_n + F_n + 1 $. Thus we have
	\begin{equation*}
		G_n =
		\begin{cases}
			A_{3u}B_{3u} + 1 &\text{ if } n=3u \\
			A_{3u}C_{3u} + 1 &\text{ if } n=3u+1 \\
			B_{3u}C_{3u} + 1 &\text{ if } n=3u+2
		\end{cases}.
	\end{equation*}
	First note that $ G_n $ has no dominant root.
	If we choose the simple sequences $ A_n,B_n,C_n $ in such a way that all characteristic roots are squares in $ \CC[X] $, then all characteristic roots of $ G_n $ are squares in $ \CC[X] $ as well.
	Thus $ f_1 $ is a square in $ \CC(X) $ if and only if $ f_1 \alpha_1 $ is a square in $ \CC(X) $.
	Moreover, choose $ A_n,B_n,C_n $ such that all occurring characteristic roots are pairwise distinct, non-constant and have pairwise no common root.
	Furthermore, let all coefficients in the Binet-formulas of $ A_n,B_n,C_n $ be non-constant polynomials without multiple roots. Assume that these coefficients are pairwise distinct and have pairwise no common root.
	Additionally, no complex number $ z \in \CC $ should be a root of both, an arbitrary root and an arbitrary coefficient.
	According to this construction no coefficient of a non-constant characteristic root of $ G_n $ is a square in $ \CC(X) $.
	Nevertheless, there are obviously infinitely many triples $ (a,b,c) $ such that $ ab+1, ac+1, bc+1 $ are all elements of $ (G_n)_{n=0}^{\infty} $.
	
	We remark that our preliminary assumptions are somewhat the opposite of those in \cite{fuchs-hutle-luca-2018} since there the characteristic polynomial is irreducible whereas in our case the characteristic polynomial splits in linear factors over the ground field.
	Concerning the conclusion, Theorem \ref{p4-thm:mainresult} can be seen as a function field analogue of Theorem 2 in \cite{fuchs-hutle-luca-2018}. We are unable to prove the result under the condition of Theorem 3 in \cite{fuchs-hutle-luca-2018}, i.e. for any large $ k $.
	
	Our result is ineffective in the sense that our method of proof does neither produce an upper bound for the number of solutions nor gives a method to actually locate them.
	
	\section{Preliminaries}
	
	For the convenience of the reader we give a short wrap-up of the notion of valuations and of the height that can e.g. also be found in \cite{fuchs-heintze-p2} and \cite{fuchs-heintze-p3}:
	
	For $ c \in \CC $ and $ f(X) \in \CC(X) $ where $ \CC(X) $ is the rational function field over $ \CC $ denote by $ \nu_c(f) $ the unique integer such that $ f(X) = (X-c)^{\nu_c(f)} p(X) / q(X) $ with $ p(X),q(X) \in \CC[X] $ such that $ p(c)q(c) \neq 0 $. Further denote by $ \nu_{\infty}(f) = \deg q - \deg p $ if $ f(X) = p(X) / q(X) $.
	These functions $ \nu $ are up to equivalence all valuations in $ \CC(X) $.
	If $ \nu_c(f) > 0 $, then $ c $ is called a zero of $ f $, and if $ \nu_c(f) < 0 $, then $ c $ is called a pole of $ f $.
	For a finite extension $ F $ of $ \CC(X) $ each valuation in $ \CC(X) $ can be extended to no more than $ [F : \CC(X)] $ valuations in $ F $. This again gives all valuations in $ F $.
	Both, in $ \CC(X) $ as well as in $ F $ the sum-formula
	\begin{equation*}
		\sum_{\nu} \nu(f) = 0
	\end{equation*}
	holds, where $ \sum_{\nu} $ means that the sum is taken over all valuations in the considered function field.
	Each valuation in a function field corresponds to a place and vice versa.
	The places can be thought as the equivalence classes of valuations.
	Moreover, we write $ \deg f = -\nu_{\infty}(f) $ for all $ f(X) \in \CC(X) $.
	
	Furthermore, the proof in the next section will take use of height functions in function fields. Let us therefore define the height of an element $ f \in F^* $ by
	\begin{equation*}
		\Hc(f) := - \sum_{\nu} \min \setb{0, \nu(f)} = \sum_{\nu} \max \setb{0, \nu(f)}
	\end{equation*}
	where the sum is taken over all valuations on the function field $ F / \CC $. Additionally we define $ \Hc(0) = \infty $.
	This height function satisfies some basic properties that are listed in the lemma below, which is proven in \cite{fuchs-karolus-kreso-2019}:
	
	\begin{mylemma}
		\label{p4-lemma:heightproperties}
		Denote as above by $ \Hc $ the height on $ F/\CC $. Then for $ f,g \in F^* $ the following properties hold:
		\begin{enumerate}[a)]
			\item $ \Hc(f) \geq 0 $ and $ \Hc(f) = \Hc(1/f) $,
			\item $ \Hc(f) - \Hc(g) \leq \Hc(f+g) \leq \Hc(f) + \Hc(g) $,
			\item $ \Hc(f) - \Hc(g) \leq \Hc(fg) \leq \Hc(f) + \Hc(g) $,
			\item $ \Hc(f^n) = \abs{n} \cdot \Hc(f) $,
			\item $ \Hc(f) = 0 \iff f \in \CC^* $,
			\item $ \Hc(A(f)) = \deg A \cdot \Hc(f) $ for any $ A \in \CC[T] \setminus \set{0} $.
		\end{enumerate}
	\end{mylemma}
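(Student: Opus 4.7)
The plan is to derive all six properties from three basic facts about valuations on the function field $F/\CC$: additivity $\nu(fg)=\nu(f)+\nu(g)$, the ultrametric inequality $\nu(f+g)\geq \min\{\nu(f),\nu(g)\}$ (with equality whenever $\nu(f)\neq \nu(g)$), and the sum formula $\sum_\nu \nu(f)=0$ already recalled in the excerpt. The guiding picture is that $\Hc(f)=\sum_\nu \max\{0,-\nu(f)\}$ counts the poles of $f$ with multiplicity while $\sum_\nu \max\{0,\nu(f)\}$ counts the zeros; the sum formula identifies these two totals, which is precisely why the definition admits the two equivalent forms stated.

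First I would dispatch (a) and (e). Non-negativity in (a) is immediate from $\max\{0,\cdot\}\geq 0$, and the identity $\Hc(f)=\Hc(1/f)$ is a direct rewriting using $\nu(1/f)=-\nu(f)$, reduced to the sum formula. For (e), any $f\in\CC^*$ satisfies $\nu(f)=0$ at every place, hence $\Hc(f)=0$; conversely, $\Hc(f)=0$ forces $\nu(f)\geq 0$ everywhere, and applying this also to $1/f$ via (a) gives $\nu(f)=0$ for all $\nu$. Since $\CC$ is algebraically closed, it is the full field of constants of $F$, so $f\in\CC^*$.

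For (b) and (c), the upper bounds are obtained by applying the ultrametric inequality (respectively additivity) at each valuation $\nu$ and then using the elementary estimates $\max\{0,\max(x,y)\}\leq \max\{0,x\}+\max\{0,y\}$ and $\max\{0,x+y\}\leq \max\{0,x\}+\max\{0,y\}$ before summing. The matching lower bounds follow from the upper bounds by the usual identity trick: apply the upper bound to the decompositions $f=(f+g)+(-g)$ and $f=(fg)\cdot(1/g)$ and use $\Hc(-g)=\Hc(g)$ together with $\Hc(1/g)=\Hc(g)$ from part (a).

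Property (d) is immediate for $n\geq 0$ from $\nu(f^n)=n\nu(f)$, and the negative case reduces to the positive one via $\Hc(f^n)=\Hc(1/f^{|n|})=\Hc(f^{|n|})$. For (f), factor $A(T)=a_d\prod_{i=1}^d(T-r_i)$ over $\CC$ and analyze $A(f)=\sum_{j=0}^d a_j f^j$ place by place: at a valuation with $\nu(f)\geq 0$ every $\nu(a_j f^j)\geq 0$, so $\nu(A(f))\geq 0$; at a valuation with $\nu(f)<0$ the values $\nu(a_j f^j)=j\nu(f)$ (using $\nu(a_j)=0$ for non-zero constants) are strictly decreasing in $j$, so the leading term $a_d f^d$ strictly dominates and the ultrametric inequality holds with equality, giving $\nu(A(f))=d\cdot\nu(f)$. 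In either case $\max\{0,-\nu(A(f))\}=d\max\{0,-\nu(f)\}$, and summing over all places yields (f). The only mildly subtle step in the whole lemma is this strict-dominance argument at a pole of $f$, but it is forced by the fact that the coefficients $a_j$ are constants and hence contribute no valuation; everything else is a one-line consequence of the definitions.
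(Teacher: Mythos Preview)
Your argument is correct. The only place one might quibble is in (f): when you write ``the values $\nu(a_j f^j)=j\nu(f)$ \ldots\ are strictly decreasing in $j$'' you are tacitly restricting to indices with $a_j\neq 0$, and you should really say that among those non-zero terms the minimum is achieved uniquely at $j=d$; this is of course what makes the ultrametric inequality an equality. (There is also a trivial edge case when $f\in\CC^*$ happens to be a root of $A$, so that $A(f)=0$; this is a defect of the lemma's statement rather than of your proof, and never arises in the applications.)

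As for comparison with the paper: the paper does not prove this lemma at all. It merely records the statement and cites \cite{fuchs-karolus-kreso-2019} for a proof. Your direct verification from the three valuation-theoretic facts (additivity, the ultrametric inequality, and the sum formula) is exactly the standard argument one finds behind such citations; it is self-contained and buys the reader independence from the external reference, at the cost of a page of routine bookkeeping that the authors chose to suppress.
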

	
	When proving our theorem, we will use the following function field analogue of the Schmidt subspace theorem. A proof for this proposition can be found in \cite{zannier-2008}:
	\begin{myprop}[Zannier]
		\label{p4-prop:functionfieldsubspace}
		Let $ F/\CC $ be a function field in one variable, of genus $ \gfr $, let $ \varphi_1, \ldots, \varphi_n \in F $ be linearly independent over $ \CC $ and let $ r \in \set{0,1, \ldots, n} $. Let $ S $ be a finite set of places of $ F $ containing all the poles of $ \varphi_1, \ldots, \varphi_n $ and all the zeros of $ \varphi_1, \ldots, \varphi_r $. Put $ \sigma = \sum_{i=1}^{n} \varphi_i $. Then
		\begin{equation*}
			\sum_{\nu \in S} \left( \nu(\sigma) - \min_{i=1, \ldots, n} \nu(\varphi_i) \right) \leq \bino{n}{2} (\abs{S} + 2\gfr - 2) + \sum_{i=r+1}^{n} \Hc (\varphi_i).
		\end{equation*}
	\end{myprop}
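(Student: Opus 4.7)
The plan is to argue by contradiction. Suppose there is an infinite family of triples $(a,b,c)$; after permuting entries, each yields indices $x \geq y \geq z$ with $ab+p = G_x$, $ac+p = G_y$, $bc+p = G_z$, hence the driving identity
\begin{equation*}
a^{2}(G_z - p) = (G_x - p)(G_y - p)
\end{equation*}
together with two companions obtained by permuting $a,b,c$; equivalently, $(G_x-p)(G_y-p)(G_z-p) = (abc)^{2}$ must be a perfect square in $\CC[X]$. Cases in which some index stays bounded are handled elementarily: if $x$ is bounded then $G_x,G_y,G_z$ take only finitely many values, each with finitely many polynomial factorizations, giving finitely many triples; if $z$ stays bounded while $x,y\to\infty$, equating $a = (G_x-p)/b = (G_y-p)/c$ yields $c(G_x-p) = b(G_y-p)$, which combined with the dominant-root asymptotic $G_n \sim f_1\alpha_1^{n}$ forces $x = y$ and $b = c$, a contradiction. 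So after passing to an infinite sub-family I may assume $x,y,z\to\infty$ and that the parity of $x+y-z$ is fixed.

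In the main case I would expand $(G_x-p)(G_y-p) - a^{2}(G_z-p) = 0$ as a vanishing sum of $N = k^{2}+3k+2$ monomials in $\CC(X)$ — namely $f_if_j\alpha_i^{x}\alpha_j^{y}$, $-pf_i\alpha_i^{x}$, $-pf_i\alpha_i^{y}$, $p^{2}$, $-a^{2}f_i\alpha_i^{z}$, and $a^{2}p$ — and apply Proposition~\ref{p4-prop:functionfieldsubspace} with $\sigma = 0$. Since $\nu(\sigma) = +\infty$ while the right-hand side of Zannier's inequality is finite, the monomials must be $\CC$-linearly dependent; pigeonholing over the finitely many possible dependence-supports, I may fix one such support across the family. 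The dominant-root condition, together with the pairwise non-proportionality of the $\alpha_i$ forced by the strict degree inequalities, then implies that the unique top-degree monomial $f_1^{2}\alpha_1^{x+y}$ of $G_xG_y$ can cancel only against $a^{2}f_1\alpha_1^{z}$, yielding
\begin{equation*}
a^{2} = f_1\alpha_1^{x+y-z} + R,\qquad \deg R < 2\deg a.
\end{equation*}

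Iterating this leading-term cancellation on $R$ — equivalently, expanding in the completion $\CC((X^{-1}))$ — produces $a = \varepsilon\sqrt{f_1\alpha_1^{x+y-z}}(1+\eta)$ with $\varepsilon\in\{\pm 1\}$ and $\nu_\infty(\eta)\to+\infty$ as the indices grow. According to the fixed parity, the square-root factor equals $\sqrt{f_1}\,\alpha_1^{(x+y-z)/2}$ (even) or $\sqrt{f_1\alpha_1}\,\alpha_1^{(x+y-z-1)/2}$ (odd), and by the non-square hypothesis lies in the auxiliary quadratic extension $F = \CC(X)(\sqrt{f_1})$ (respectively $F = \CC(X)(\sqrt{f_1\alpha_1})$) but not in $\CC(X)$. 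Forcing $a\in\CC(X)\subset F$ to equal this expression, a second application of Proposition~\ref{p4-prop:functionfieldsubspace} inside $F$ — applied to the expansion of $\sqrt{u_xu_y/u_z}$ with $u_n := (G_n-p)/(f_1\alpha_1^{n})$, and exploiting $\nu_\infty(\eta)\to\infty$ — rules out this equality for all sufficiently large indices, giving the contradiction. The main obstacle I foresee is precisely this second subspace-theorem application: the expansion under the square root lives a priori only in the completion, and transferring the resulting truncated identities back into the finite-dimensional $F$-setting required by Proposition~\ref{p4-prop:functionfieldsubspace} demands careful bookkeeping of the genus $\gfr$ of $F$, of the enlarged set $S$ of bad places, and of the $\Hc$-growth of the approximating rational functions as $x,y,z$ grow.
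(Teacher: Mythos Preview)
Your proposal does not address the stated proposition at all. Proposition~\ref{p4-prop:functionfieldsubspace} is Zannier's function-field subspace inequality: a general statement about linearly independent elements $\varphi_1,\ldots,\varphi_n$ of an arbitrary function field $F/\CC$, with no reference to Diophantine triples, recurrences, or the polynomials $a,b,c,p$. The paper itself does not prove this proposition; it simply cites \cite{zannier-2008} for the proof. What you have written is instead a sketch of a proof of Theorem~\ref{p4-thm:mainresult}, the main result of the paper, in which Proposition~\ref{p4-prop:functionfieldsubspace} is invoked as a black box. Since your argument \emph{uses} the proposition (twice, in fact), it cannot serve as a proof of it --- that would be circular.

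Even read as an attempted proof of Theorem~\ref{p4-thm:mainresult}, your sketch diverges from the paper's argument and has gaps. The paper does not apply Proposition~\ref{p4-prop:functionfieldsubspace} to the identity $(G_x-p)(G_y-p)-a^2(G_z-p)=0$ with $\sigma=0$; rather, it first shows by a gcd argument that $x,y,z$ grow at comparable rates, then expands $abc=\sqrt{(G_x-p)(G_y-p)(G_z-p)}$ via a multinomial series, truncates, and applies the proposition once to $\sigma=abc-\sum_j\varphi_j$ in the fixed quadratic extension $F=\CC(X,\sqrt{\alpha_1},\sqrt{f_1})$. Your route of applying Zannier with $\sigma=0$ is problematic because $\nu(\sigma)=+\infty$ makes the left-hand side of the inequality infinite, which does not directly force a $\CC$-linear dependence among the monomials you list (you would need to argue separately, and the monomials involve the unknown $a^2$, whose height is not a priori controlled). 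Your handling of the case ``$z$ bounded, $x,y\to\infty$'' is also incorrect as stated: from $\deg a\leq\deg b\leq\deg c$ the paper deduces $x<y<z$, so $z$ bounded with $x,y\to\infty$ cannot occur, and the genuinely delicate case is $x$ bounded with $y,z\to\infty$, which the paper treats by a degree-comparison argument you do not reproduce.
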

	
	\section{Proof}
	
	In this section we will prove Theorem \ref{p4-thm:mainresult}. Before we begin the proof, let us remark that we will use some ideas of \cite{fuchs-hutle-luca-2018} that are quite useful also in our situation.
	
	\begin{proof}[Proof of Theorem \ref{p4-thm:mainresult}]
		We are going to prove the statement indirectly and assume therefore that there are infinitely many triples with the required properties.
		First note that without loss of generality we can assume that for a still infinite set of triples the inequality $ \deg a \leq \deg b \leq \deg c $ holds.
		For any such triple exist non-negative integers $ x,y,z $ such that
		\begin{equation}
			\label{p4-eq:defrelations}
			ab+p = G_x, \qquad ac+p = G_y, \qquad bc+p = G_z.
		\end{equation}
		If all three parameters $ x,y,z $ were bounded by a constant, then there can be only finitely many triples $ (a,b,c) $. Hence it must hold that $ \max \set{x,y,z} \rightarrow \infty $.
		
		Since $ \alpha_1 $ is the dominant root, for large enough $ n $, i.e. for $ n \geq n_0 $, the degree satisfies the (in)equality
		\begin{equation}
			\label{p4-eq:degformula}
			\deg G_n = \deg f_1 + n \deg \alpha_1 > \deg p.
		\end{equation}
		Now, as $ \max \set{x,y,z} \rightarrow \infty $, we get $ \max \set{\deg G_x, \deg G_y, \deg G_z} \rightarrow \infty $. Thus $ \deg c \rightarrow \infty $, and consequently we have $ z \rightarrow \infty $ as well as $ y \rightarrow \infty $.
		Therefore for a still infinite subset of triples we can assume that both, $ z $ and $ y $, are not smaller than $ n_0 $, which implies that equation \eqref{p4-eq:degformula} is applicable.
		
		Recalling the ordering $ \deg a \leq \deg b \leq \deg c $, we get $ \deg G_x \leq \deg G_y \leq \deg G_z $. Now again using \eqref{p4-eq:degformula} and taking into account that $ a,b,c $ are pairwise distinct yields $ x < y < z $.
		
		Let us for the moment assume that $ x $ is bounded by a constant. So $ G_x $ is the same fixed value for infinitely many triples. This implies that $ a $ and $ b $ are fixed for infinitely many triples. Therefore
		\begin{equation*}
			bG_y - aG_z = abc + bp - abc - ap = (b-a)p
		\end{equation*}
		is constant for infinitely many triples. Dividing by $ f_1 $ and using the power sum representation of the recurrence sequence, we can rewrite this equation in the form
		\begin{equation}
			\label{p4-eq:xunbnd1steq}
			(b - a\alpha_1^{z-y}) \alpha_1^y = - b\frac{f_2}{f_1} \alpha_2^y - \cdots - b\frac{f_k}{f_1} \alpha_k^y + a\frac{f_2}{f_1} \alpha_2^z + \cdots + a\frac{f_k}{f_1} \alpha_k^z + \frac{(b-a)p}{f_1}.
		\end{equation}
		If the left hand side of equation \eqref{p4-eq:xunbnd1steq} is non-zero, then it has degree at least $ y \deg \alpha_1 $.
		However, the degree of the right hand side is at most $ C_0 + z \deg \alpha_2 $, where $ C_0 $ is a constant.
		Since $ \deg G_y = \deg a + \deg c $ and $ \deg G_z = \deg b + \deg c $ the equation
		\begin{equation*}
			(z-y) \deg \alpha_1 = \deg G_z - \deg G_y = \deg b - \deg a
		\end{equation*}
		is satisfied which implies that $ \rho := z-y $ is constant.
		Consequently, the degree of the right hand side of equation \eqref{p4-eq:xunbnd1steq} is at most $ C_1 + y \deg \alpha_2 $ for a new constant $ C_1 $.
		The only way this can work is that both sides of equation \eqref{p4-eq:xunbnd1steq} are zero. Therefore, $ b = a \alpha_1^{\rho} $.
		Considering the equation
		\begin{equation*}
			(b-a)p = bG_y - aG_z = b \left( f_1\alpha_1^y + \cdots + f_k\alpha_k^y \right) - a \left( f_1\alpha_1^{y+\rho} + \cdots + f_k\alpha_k^{y+\rho} \right),
		\end{equation*}
		dividing this by $ a $ and replacing $ b $ by $ a \alpha_1^{\rho} $ yields
		\begin{equation}
			\label{p4-eq:xunbnd2ndeq}
			(\alpha_1^{\rho} - 1) p = \alpha_1^{\rho} \left( f_2\alpha_2^y + \cdots + f_k\alpha_k^y \right) - \left( f_2\alpha_2^{y+\rho} + \cdots + f_k\alpha_k^{y+\rho} \right).
		\end{equation}
		The left hand side of equation \eqref{p4-eq:xunbnd2ndeq} has constant degree whereas the degree of the right hand side is $ \rho \deg \alpha_1 + \deg f_2 + y \deg \alpha_2 $. This is a contradiction for large $ y $, implying that $ x $ cannot be bounded by a constant.
		
		Overall, there is a still infinite set of triples such that all three indices $ x,y,z $ are always greater than an arbitrary fixed constant. In particular, we can assume that no index is smaller than $ n_0 $.
		
		We have already mentioned above, that $ x < y < z $. In the next step it will be proven that $ z $ can not grow much faster than $ x $. For doing so let
		\begin{equation*}
			g := \gcd \setb{G_y-p, G_z-p}
		\end{equation*}
		be the greatest common divisor of these two polynomials.
		Now we distinguish between two cases.
		Firstly, assume $ y \leq \kappa z $, where $ \kappa $ is a rational number in the interval $ (0,1) $ which will be determined later.
		It holds that
		\begin{align*}
			\deg g &\leq \deg (G_y-p) = \deg f_1 + y \deg \alpha_1 \\
			&\leq \deg f_1 + \kappa z \deg \alpha_1 \leq C_2 + \kappa z \deg \alpha_1.
		\end{align*}
		Secondly, we assume in the other case that $ y > \kappa z $.
		Thus we have $ z-y < z - \kappa z = (1-\kappa) z $.
		By the definition of $ g $ as greatest common divisor, it immediately follows that $ g $ is also a divisor of $ (G_z-p) - \alpha_1^{z-y} (G_y-p) $ and therefore
		\begin{align*}
			\deg g &\leq \deg \left( (G_z-p) - \alpha_1^{z-y} (G_y-p) \right) \\
			&= \deg \left( (f_2 \alpha_2^z + \cdots + f_k \alpha_k^z - p) - (f_2 \alpha_1^{z-y} \alpha_2^y + \cdots + f_k \alpha_1^{z-y} \alpha_k^y - \alpha_1^{z-y} p) \right) \\
			&= \deg f_2 + (z-y) \deg \alpha_1 + y \deg \alpha_2 \\
			&\leq \deg f_2 + (z-y) \deg \alpha_1 + y (\deg \alpha_1 - 1) \\
			&= C_3 + z \deg \alpha_1 - y
			< C_3 + z \deg \alpha_1 - \kappa z
			= C_3 + z (\deg \alpha_1 - \kappa).
		\end{align*}
		We want to choose $ \kappa $ in such a way that $ \kappa \deg \alpha_1 = \deg \alpha_1 - \kappa $. Hence we set
		\begin{equation*}
			\kappa = \frac{\deg \alpha_1}{1 + \deg \alpha_1}
		\end{equation*}
		which yields in both of our cases
		\begin{equation}
			\label{p4-eq:gcdbound}
			\deg g \leq C_4 + z \kappa \deg \alpha_1.
		\end{equation}
		If we denote $ \widetilde{g} = \gcd \setb{G_x-p, G_z-p} $, then $ G_z-p $ is a divisor of $ g \widetilde{g} $ since $ c \mid g $, $ b \mid \widetilde{g} $ and $ G_z-p = bc $. This gives us
		\begin{align*}
			\deg f_1 + x \deg \alpha_1 &= \deg (f_1 \alpha_1^x) = \deg G_x = \deg (G_x - p) \geq \deg \widetilde{g} \\
			&\geq \deg (G_z-p) - \deg g = \deg f_1 + z \deg \alpha_1 - \deg g
		\end{align*}
		as well as by using inequality \eqref{p4-eq:gcdbound} that
		\begin{equation*}
			x \deg \alpha_1 \geq z \deg \alpha_1 - \deg g \geq z \deg \alpha_1 - C_4 - z \kappa \deg \alpha_1
		\end{equation*}
		and
		\begin{equation}
			\label{p4-eq:samegrowth}
			x \geq z - z \kappa - \frac{C_4}{\deg \alpha_1} = (1-\kappa) z - C_5 > C_6 z.
		\end{equation}
		Thus $ z $ is bounded above by $ x/C_6 $. This fact means that the three indices grow with a similar rate.
		
		Combining the three equations in \eqref{p4-eq:defrelations}, we can express the polynomials $ a,b,c $ by elements of the linear recurrence sequence $ (G_n)_{n=0}^{\infty} $ in the following way:
		\begin{align*}
			a &= \frac{\sqrt{G_x-p} \cdot \sqrt{G_y-p}}{\sqrt{G_z-p}}, \\
			b &= \frac{\sqrt{G_x-p} \cdot \sqrt{G_z-p}}{\sqrt{G_y-p}}, \\
			c &= \frac{\sqrt{G_y-p} \cdot \sqrt{G_z-p}}{\sqrt{G_x-p}}.
		\end{align*}
		By using the square root symbol in an equation we mean that the equation holds for a suitable choose of one of the two possible square roots, which differ only by the factor $ -1 $. This choose can vary from one equation to another.
		
		For this reason, we aim for rewriting the expression $ \sqrt{G_n-p} $ in a more suitable manner.
		This will be done by applying the (formal) multinomial series expansion to the power sum representation of our recurrence sequence:
		\begin{align*}
			\sqrt{G_n-p} &= \sqrt{f_1\alpha_1^n + \cdots + f_k\alpha_k^n - p} \\
			&= \sqrt{f_1} \alpha_1^{n/2} \sqrt{1 + \frac{f_2}{f_1} \left( \frac{\alpha_2}{\alpha_1} \right)^n + \cdots + \frac{f_k}{f_1} \left( \frac{\alpha_k}{\alpha_1} \right)^n - \frac{p}{f_1} \left( \frac{1}{\alpha_1} \right)^n} \\
			&= \sqrt{f_1} \alpha_1^{n/2} \sum_{h_1,\ldots,h_k=0}^{\infty} \gamma_{h_1,\ldots,h_k} \left( \frac{-p}{f_1} \right)^{h_1} \left( \frac{1}{\alpha_1} \right)^{nh_1} \left( \prod_{i=2}^{k} \left( \frac{f_i}{f_1} \right)^{h_i} \left( \frac{\alpha_i}{\alpha_1} \right)^{nh_i} \right) \\
			&= \sqrt{f_1} \alpha_1^{n/2} \sum_{h_1,\ldots,h_k=0}^{\infty} t_{h_1,\ldots,h_k}^{(n)} = \sqrt{f_1} \alpha_1^{n/2} \sum_{\underline{h}=0}^{\infty} t_{\underline{h}}^{(n)}
		\end{align*}
		where we use the notation $ \underline{h} = (h_1,\ldots,h_k) $ and
		\begin{equation*}
			t_{h_1,\ldots,h_k}^{(n)} = \gamma_{h_1,\ldots,h_k} \left( \frac{-p}{f_1} \right)^{h_1} \left( \frac{1}{\alpha_1} \right)^{nh_1} \left( \prod_{i=2}^{k} \left( \frac{f_i}{f_1} \right)^{h_i} \left( \frac{\alpha_i}{\alpha_1} \right)^{nh_i} \right).
		\end{equation*}
		
		The next step is now to calculate a lower bound for the valuation $ \nu_{\infty} $ of the quantity above, which we will need later on.
		Since $ \gamma_{h_1,\ldots,h_k} \in \CC $, we get
		\begin{align*}
			\nu_{\infty} \left( t_{\underline{h}}^{(n)} \right) &= h_1 \left( n \deg \alpha_1 + \deg \frac{f_1}{p} \right) + \sum_{i=2}^{k} h_i \left( n (\deg \alpha_1 - \deg \alpha_i) + \deg \frac{f_1}{f_i} \right) \\
			&\geq h_1 (n + C_7) +  \sum_{i=2}^{k} h_i (n + C_7)
			= \left( \sum_{i=1}^{k} h_i \right) (n + C_7)
		\end{align*}
		where $ C_7 = \min \set{\deg f_1 - \deg f_2, \ldots, \deg f_1 - \deg f_k, \deg f_1 - \deg p} $.
		Note that for our purpose we can assume $ n + C_7 > 0 $.
		
		Combining the representations of $ a,b,c $ and $ \sqrt{G_n-p} $ we have so far, yields the following representation of the product $ abc $ of the elements in a triple:
		\begin{align*}
			abc &= \sqrt{G_x-p} \sqrt{G_y-p} \sqrt{G_z-p} \\
			&= f_1^{3/2} \alpha_1^{(x+y+z)/2} \sum_{\underline{h}^{(x)}=0}^{\infty} t_{\underline{h}^{(x)}}^{(x)} \sum_{\underline{h}^{(y)}=0}^{\infty} t_{\underline{h}^{(y)}}^{(y)} \sum_{\underline{h}^{(z)}=0}^{\infty} t_{\underline{h}^{(z)}}^{(z)} \\
			&= f_1^{3/2} \alpha_1^{(x+y+z)/2} \sum_{\underline{h}^{(x)}, \underline{h}^{(y)}, \underline{h}^{(z)}=0}^{\infty} t_{\underline{h}^{(x)}}^{(x)} t_{\underline{h}^{(y)}}^{(y)} t_{\underline{h}^{(z)}}^{(z)}.
		\end{align*}
		For the valuation we get the lower bound
		\begin{align*}
			\nu_{\infty} \left( t_{\underline{h}^{(x)}}^{(x)} t_{\underline{h}^{(y)}}^{(y)} t_{\underline{h}^{(z)}}^{(z)} \right) &= \nu_{\infty} \left( t_{\underline{h}^{(x)}}^{(x)} \right) + \nu_{\infty} \left( t_{\underline{h}^{(y)}}^{(y)} \right) + \nu_{\infty} \left( t_{\underline{h}^{(z)}}^{(z)} \right) \\
			&\geq \left( \sum_{i=1}^{k} h_i^{(x)} + \sum_{i=1}^{k} h_i^{(y)} + \sum_{i=1}^{k} h_i^{(z)} \right) (x + C_7).
		\end{align*}
		
		Let $ J > 0 $ be a number to be fixed later.
		Then there exists a natural number $ L $, depending on $ J $, such that
		\begin{multline*}
			abc = f_1^{3/2} \alpha_1^{(x+y+z)/2} t_1^{(x,y,z)} + \cdots + f_1^{3/2} \alpha_1^{(x+y+z)/2} t_L^{(x,y,z)} \\
			+ \sum_{\nu_{\infty} \left( t_{\underline{h}^{(x)}}^{(x)} t_{\underline{h}^{(y)}}^{(y)} t_{\underline{h}^{(z)}}^{(z)} \right) \geq J(x + C_7)} f_1^{3/2} \alpha_1^{(x+y+z)/2} t_{\underline{h}^{(x)}}^{(x)} t_{\underline{h}^{(y)}}^{(y)} t_{\underline{h}^{(z)}}^{(z)}.
		\end{multline*}
		Define $ \varphi_0 = abc $ as well as
		\begin{equation*}
			\varphi_j = - f_1^{3/2} \alpha_1^{(x+y+z)/2} t_j^{(x,y,z)}
		\end{equation*}
		for $ j=1,\ldots,L $. Furthermore, put
		\begin{equation*}
			\sigma = \sum_{j=0}^{L} \varphi_j.
		\end{equation*}
		Let $ S $ be a finite set of places of $ F = \CC(X, \sqrt{\alpha_1}, \sqrt{f_1}) $ containing all places lying over the zeros of $ \alpha_1,\ldots,\alpha_k $, the zeros and poles of $ f_1,\ldots,f_k $, the zeros of $ p $, and all infinite places.
		Note that $ F $ contains always both possible values of the square roots if we require that one of them is contained since they differ only by the factor $ -1 $.
		By applying Proposition \ref{p4-prop:functionfieldsubspace}, if $ \varphi_0, \ldots, \varphi_L $ are linearly independent over $ \CC $, we get the inequality
		\begin{align*}
			\nu_{\infty} (\sigma) - \min_{i=0, \ldots, L} \nu_{\infty} (\varphi_i) &\leq C_8 + \Hc(abc) \\
			&\leq C_8 + C_9 \deg (abc) \\
			&= C_8 + \frac{C_9}{2} \deg (a^2b^2c^2) \\
			&= C_8 + \frac{C_9}{2} \deg ((G_x-p)(G_y-p)(G_z-p)) \\
			&= C_8 + \frac{C_9}{2} (3 \deg f_1 + (x+y+z) \deg \alpha_1) \\
			&\leq C_{10} + C_{11} z \deg \alpha_1.
		\end{align*}
		In order to get also a lower bound for this expression we look at
		\begin{equation*}
			\min_{i=0, \ldots, L} \nu_{\infty} (\varphi_i) \leq \nu_{\infty} (abc) = - \deg (abc) = - \frac{3}{2} \deg f_1 - \frac{x+y+z}{2} \deg \alpha_1,
		\end{equation*}
		which gives us
		\begin{align*}
			\nu_{\infty} (\sigma) - \min_{i=0, \ldots, L} \nu_{\infty} (\varphi_i) &\geq J (x+C_7) + \nu_{\infty} \left( f_1^{3/2} \alpha_1^{(x+y+z)/2} \right) - \min_{i=0, \ldots, L} \nu_{\infty} (\varphi_i) \\
			&\geq J (x+C_7).
		\end{align*}
		Now, recalling inequality \eqref{p4-eq:samegrowth}, we compare the lower with the upper bound to get
		\begin{equation*}
			J (x+C_7) \leq C_{10} + C_{11} z \deg \alpha_1 \leq C_{10} + \frac{C_{11}}{C_6} x \deg \alpha_1.
		\end{equation*}
		Therefore we set $ J := 1 + \frac{C_{11}}{C_6} \deg \alpha_1 $ and note that the right hand side does not depend on $ J $.
		Plugging this definition into the last inequality yields
		\begin{equation*}
			x \leq C_{10} - JC_7
		\end{equation*}
		which is a contradiction since we have already proven that $ x $ cannot be bounded by a constant and that therefore we may assume that all three indices are greater than any fixed constant.
		
		Thus $ \varphi_0, \ldots, \varphi_L $ must be linearly dependent over $ \CC $. Without loss of generality we may assume that $ \varphi_1, \ldots, \varphi_L $ are linearly independent since otherwise we could group them together before doing the previous step.
		Hence, in a relation of linear dependence, there must be a non-zero coefficient in front of $ abc $. So we can write
		\begin{equation*}
			abc = \sum_{j=1}^{L} \lambda_j f_1^{3/2} \alpha_1^{(x+y+z)/2} t_j^{(x,y,z)} = f_1^{3/2} \alpha_1^{(x+y+z)/2} \sum_{j=1}^{L} \lambda_j t_j^{(x,y,z)}
		\end{equation*}
		for $ \lambda_j \in \CC $.
		
		We distinguish between two cases considering the parity of $ x+y+z $.
		If $ x+y+z $ is even, then we have
		\begin{equation*}
			f_1^{1/2} = \frac{abc}{f_1 \alpha_1^{(x+y+z)/2} \sum_{j=1}^{L} \lambda_j t_j^{(x,y,z)}} \in \CC(X)
		\end{equation*}
		which contradicts the assumption in our theorem that $ f_1 $ is no square in $ \CC(X) $.
		When $ x+y+z $ is odd, we get
		\begin{equation*}
			(f_1 \alpha_1)^{1/2} = \frac{abc}{f_1 \alpha_1^{(x+y+z-1)/2} \sum_{j=1}^{L} \lambda_j t_j^{(x,y,z)}} \in \CC(X)
		\end{equation*}
		which contradicts the assumption in our theorem that $ f_1 \alpha_1 $ is no square in $ \CC(X) $.
		All in all there can be only finitely many triples satisfying the required properties.
	\end{proof}

\end{document}